\newcommand{\oemph}[1]{\textit{#1}}
\newcommand{\norm}[2]{\left\| {#1}\right\| _{#2}}
\newcommand{\nnorm}[2]{\| {#1} \| _{#2}}
\newcommand{\all}[2]{\left \{\,{#1}\,:\,{#2}\,\right \}}
\newcommand{\Eref}[1]{\eqref{#1}}
\newcommand{\eref}[1]{\eqref{#1}}
\newcommand{\Sref}[1]{Section \ref{#1}}
\newcommand{\Fref}[1]{Figure \ref{#1}}
\newcommand{\fl}{}
\theoremstyle{plain} 
\newtheorem{theorem}{Theorem}[section]
\newtheorem{prop}[theorem]{Proposition}
\newtheorem{lem}[theorem]{Lemma} 
\newtheorem{cor}[theorem]{Corollary}
\theoremstyle{definition} 
\newtheorem{defn}[theorem]{Definition}
\newtheorem{rem}[theorem]{Remark}
\begin{document}

\title[Lyapunov exponents and exponential decay of correlations]{On the relation between 
Lyapunov exponents and exponential decay of correlations}

\author{Julia Slipantschuk, Oscar F.~Bandtlow and Wolfram Just}

\address{School of Mathematical Sciences, Queen Mary University of London, Mile End Road,
London E1 4NS, UK}

\email{j.slipantschuk@qmul.ac.uk,\, o.bandtlow@qmul.ac.uk, \,w.just@qmul.ac.uk}

\begin{abstract}
Chaotic dynamics with sensitive dependence on initial conditions may result in
exponential decay of correlation functions. We show that for one-dimensional interval 
maps the corresponding quantities, that is, Lyapunov exponents and exponential decay rates,
are related. For piecewise linear expanding Markov maps observed via piecewise analytic functions 
we provide explicit bounds of the decay rate in terms of the Lyapunov exponent.
In addition, we comment on similar relations for general piecewise smooth expanding maps.
\end{abstract}



\maketitle

\section{Introduction}\label{sec:1}

Chaotic motion accounts on the one hand for the well-known phenomenon of sensitive dependence on initial conditions, that is, exponentially
fast divergence of nearby orbits, and on the other hand for
the phenomenon of decay of correlations or mixing.
Both properties are intimately related with the observation
that even low-dimensional chaotic systems share common features
with random processes.

This intuitive picture
has been used as a basis to address some of the fundamental
questions arising in nonequilibrium statistical mechanics 
\cite{Kryl:79,Kubo_Sci86,FaIsVu_PLA90}.
In fact,
it is a simple exercise to show that topological mixing implies 
sensitive dependence on initial conditions. At the  measure-theoretical
level, however, relating Lyapunov exponents, the quantitative measures for sensitive dependence
on initial conditions, 
to decay rates of correlation functions is a more involved task. 
For instance, it is easy to construct 
simple maps with finite Lyapunov exponent and arbitrarily small correlation decay 
(see, for example, a Markov model described in \cite{Just_PLA90}). 

Thus, at a quantitative level it is tempting to explore in some detail in which way 
the rate of correlation decay is linked with Lyapunov exponents, as both 
quantities are supposed to have a 
common origin. In more general terms and from a wider perspective
this topic can be viewed as belonging to the realm of fluctuation dissipation relations for
nonequilibrium dynamics, where one cause, the underlying detailed dynamical structure, is
responsible for the approach to the stationary state, that is, for the decay of
correlations, but, at the same time, is responsible for fluctuation properties
at a microscopic level, or in our context for the sensitive dependence of initial conditions and 
a positive Lyapunov exponent. Furthermore, any relation 
between decay of correlations and Lyapunov exponents is of great practical interest, 
as the measurement of Lyapunov exponents, unlike the
rate of correlation decay, is notoriously difficult to determine in real world experiments \cite{Abar:96,KaSc:97}. 
In \cite{Spro:96} it was even suggested to take correlation decay rates as 
a meaningful approximation for Lyapunov exponents.

The problem we want to address can be illustrated by a very basic textbook example,
probably considered for the first time more than two decades ago 
\cite{BaHeMePo_PRA88}.
Consider a linear full branch map (see \Fref{fig:1}) on the unit interval $I=[0,1]$, that is,
a map $f:I\rightarrow I$ having a finite partition of $I$ into closed intervals $I_k$ with pairwise 
disjoint interior such that 
(i) for each $k$, we have $f(I_k)=I$, and
(ii) $f$ has constant slope $\gamma_k$ on each $I_k$.
The physical invariant measure is given by the Lebesgue measure and the 
Lyapunov exponent 
with respect to this measure
can be expressed in terms of the slopes
\begin{equation}\label{aa}
\Lambda = \sum_k |I_k| \ln|\gamma_k|,
\end{equation}
with $|I_k|=1/|\gamma_k|$ denoting the size of the interval $I_k$. 

The exponential rate of decay for correlation 
functions 
is determined by the negative logarithm of the second largest eigenvalue in modulus of the associated Perron-Frobenius operator.
In this setting it is well known (see, for example, \cite{MoSoOs_PTP81})
that eigenfunctions of the 
Perron-Frobenius operator are given by polynomials and
that the corresponding eigenvalues $\nu_m$ can be expressed as
\begin{equation*}\label{ab}
\nu_m = \sum_k \frac{1}{|\gamma_k|} \frac{1}{\gamma_k^{m}} = 
\sum_k |I_k| \frac{1}{\gamma_k^{m}} \,\qquad (m \geq 0),
\end{equation*}
with largest eigenvalue $\lambda_0=\nu_0=1$ and the subleading eigenvalue $\lambda_1$ 
being the second largest in modulus, $|\lambda_1| = \max\{|\nu_1|,\nu_2\}$. 
Thus, correlation functions of sufficiently smooth observables decay typically at an
exponential rate $\alpha=-\ln|\lambda_1|$.
Since $|\lambda_1|\geq \nu_2>0$ we obtain an upper bound for the decay rate 
$\alpha \leq -\ln \nu_2$ which can now be related to the Lyapunov exponent
\eqref{aa}. If we apply Jensen's inequality to the convex function 
$\varphi(x)=-\ln (x)$ we end up with
\begin{equation}\label{ac}
\alpha \leq -\ln \sum_k |I_k| \frac{1}{\gamma_k^{2}} \leq 
\sum_k |I_k| (-\ln \frac{1}{\gamma_k^{2}}) = 2 \Lambda \, .
\end{equation}
The estimate of the decay rate in \Eref{ac} has been based on $\nu_2$ which contains
positive terms only, even if the slopes have different signs. As a result the upper
bound is given by twice the Lyapunov exponent. If all slopes have the same sign,
say $\gamma_k>1$, then $\nu_1$ determines the subleading eigenvalue and the Lyapunov
exponent itself yields an upper bound for the decay rate, that is, $\alpha \leq \Lambda$. 

\begin{figure}[h!]
  \centering
    \includegraphics[width=.30\textwidth]{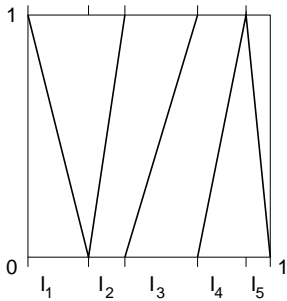}
  \caption{Diagrammatic view of a linear full branch map. \label{fig:1}}
\end{figure}

In this article we will address the question 
to which extent this simple reasoning can be generalised to a larger class of systems. 
From a mathematical perspective there is a considerable amount of 
literature on the existence of invariant measures of one-dimensional maps, 
the corresponding Lyapunov exponent, estimates for the spectra of the 
associated 
Perron-Frobenius  operator, and a possible relation between
Lyapunov exponents and the decay of correlation functions \cite{CoEc_JSP04}. Expansiveness of the
underlying map $f:I\to I$
is to some extent the key ingredient to establish a 
physical measure $\mu$, that is, an invariant measure which asymptotically characterises a large set of orbits, or in formal terms an invariant ergodic probability measure which is absolutely continuous with respect to Lebesque measure (see, for example, 
\cite{Live_AM95,Kell_IJBC99,Bala:00,BrLuSt_ASENS03}).

The Lyapunov exponent for a Lebesque typical point $x\in I$ is given by
\begin{equation}\label{eq:Lyapunov}
\Lambda = \int_I \ln|f'(x)| d\mu
= \lim_{n\to \infty} \frac{1}{n} \ln{|(f^n)^{\prime}(x)|.}  
\end{equation}
Regarding decay rates for correlation functions, a substantial part of the problem 
consists in giving the notion of the rate of correlation decay a proper meaning
(see, for example, \cite{CoEc_JSP04}). With the standard definition of a 
correlation
function for two observables $\varphi$ and $\psi$,
\begin{equation}\label{CorrelFunc}
C_{\varphi,\psi}(n) = \int_I{\varphi(f^n(x)) \psi(x) d\mu} - \int_I{\varphi(x) d\mu} 
\int_I{\psi(x) d\mu},
\end{equation}
the exponential decay rate $\alpha_{\varphi,\psi}$ 
governing the asymptotic behaviour of the correlation function may be
formally introduced by
\begin{equation*}\label{decaycorr}
\alpha_{\varphi,\psi}= \sup\{s:\limsup_{n\rightarrow \infty} |\exp(sn) C_{\varphi,\psi}(n)|<\infty\} \, .
\end{equation*}
The rate $\alpha_{\varphi,\psi}$ is sensitive to the choice of $\varphi$ and 
$\psi$ and can be made arbitrarily small or large by special choices for the observables 
\cite{CraCa_PHYD83}. However, it is possible to define a rate of decay with respect  to ``typical'' observables from  some linear space~$\mathcal{H}$ 
\begin{equation*}\label{decayrate}
\alpha_{\mathcal{H}} = \inf\{\alpha_{\varphi,\psi} : \varphi,\psi \in {\mathcal{H}}\} \, . 
\end{equation*}
We will call this quantity the 
\oemph{mixing rate}. Note that this rate still depends,
for instance, on the degree of smoothness shared by the observables in ${\mathcal H}$. 

One approach for determining bounds on the  mixing rate $\alpha_{\mathcal{H}}$ relies on reformulating 
\eref{CorrelFunc} in terms of the Perron-Frobenius operator. 
The rate $\alpha_{\mathcal{H}}$ 
is then determined by the subleading eigenvalue or by the essential spectral radius
of the Perron-Frobenius operator. The desired relation between the Lyapunov exponent and the 
mixing rate expressed in \Eref{ac} then becomes a lower bound for the subleading eigenvalue.
There is a considerable body of literature on upper bounds 
for spectral values 
(see, for example, \cite{Live_AM95,Kell_IJBC99,BrLuSt_ASENS03}) providing useful tools to establish ergodic properties
of dynamical systems. 
However, to the best of our knowledge hardly any nontrivial lower bounds exist 
(see, however, \cite{Naud_AnnHP09},  where an exponential lower bound for correlation functions of suspension semiflows is given). 

The main tool to establish the desired inequality consists in studying the properties
of the generalised Perron-Frobenius operator $\mathcal{L}_{\beta}$ with potential $-\beta \ln|f^{\prime}|,$
\begin{equation}\label{eq:RPF}
 (\mathcal{L}_{\beta} h)(x) = \sum_{y \in f^{-1}(x)} \frac{h(y)}{|f^{\prime}(y)|^{\beta}} \, . 
\end{equation}
For $\beta=1$ this expression reduces to the Perron-Frobenius operator which has leading 
eigenvalue one.
The subleading eigenvalue $\lambda_1$  (or the essential spectral radius, if no subleading 
eigenvalue exists) determines the mixing rate $\alpha_{\mathcal{H}}=-\ln|\lambda_1|$.
In addition, the derivative of the largest eigenvalue $\nu_0(\beta)$ of \Eref{eq:RPF}
with respect to $\beta$, that is, the derivative of the topological pressure,
determines the Lyapunov exponent 
(see, for example, \cite{BoRa_PD87} for a basic exposition). 
Finally, the required estimate follows from the convexity
of the topological pressure and gives a lower bound for the subleading eigenvalue of
the Perron-Frobenius operator. 

The main challenge is, of course, to put these ideas into
practice. Restricting to piecewise linear Markov maps considerably reduces 
the need to worry about mathematical subtleties, as the operator $\mathcal{L}_\beta$ admits finite-dimensional
matrix representations when considering observables consisting of piecewise analytic functions. 
Thus, at a computational level all technical details reduce to
straightforward matrix manipulations (see \Sref{sec:2}), allowing us to keep the presentation 
elementary and, at the same time, making the underlying ideas transparent.
To keep the presentation self-contained  basic properties of piecewise linear 
Markov maps  together with properties of $\mathcal{L}_\beta$ are summarised in the Appendix. 
In \Sref{sec:3} we are addressing the question whether the bounds presented in \Sref{sec:2}
hold for general (non-linear) expanding Markov maps. We have compelling evidence
that the estimate breaks down if analytic observables are considered 
(see also \cite{ChPaRu_PRL90}), but 
the validity of our proposition can be restored if observables of bounded variation are
considered.

\section{Piecewise linear Markov maps}\label{sec:2}

Let us consider a topologically mixing piecewise linear expanding Markov map $f:I\rightarrow I$ 
with respect to the partition $\mathcal{P}=\{I_1,\ldots I_N\}$ of the interval $I$ 
(see Definition~\ref{defn:MarkovMap} for a formal account). 
Denote by $f_k:=f_{I_{k}}$ the $k$-th branch of $f$.
A nice property of 
such maps is that the corresponding transfer operators have finite matrix
representations. In fact, the space of piecewise polynomial functions of degree less than $M$, that is, the space of functions of the form 
$x\mapsto \sum_{k=1}^N \sum_{m=0}^M a_{k m} x^m \chi_{k}(x)$, where $\chi_k$ is the characteristic 
function of the interval $I_k$, is an $\mathcal{L}_{\beta}$-invariant subspace. 
While it appears to be difficult to trace the earliest reference
for this result, 
to the best of our knowledge the first  
experimentally relevant application appeared in the context of
power spectra for intermittent dynamics (see \cite{MoSoOs_PTP81,SoYoOkMo_JSP84}). 

Now, in the natural basis 
of piecewise monomials, the 
operator $\mathcal{L}_{\beta}$ restricted to the invariant subspace mentioned above 
is represented
by the $(M+1)^N\times (M+1)^N $  block upper triangular matrix 
 
\begin{equation}\label{blockmatrix}
\left(
\begin{array}{cccc}
T^{(00)}(\beta)
 & \quad T^{(01)}(\beta) & \quad \ldots &
T^{(0M)}(\beta) \\
0 & \quad T^{(11)}(\beta) & \quad  \ddots & \vdots \\
\vdots &  \ddots & \quad \ddots & \quad T^{(M-1 M)}(\beta) \\
0 & \cdots & 0 & \quad T^{(M M)}(\beta) 
\end{array}
\right) \, .
\end{equation}

A calculation similar to the one used to obtain the matrix representation $T^{(0 0)}(1)$ of $\mathcal{L}_1$ on the space of piecewise constant functions 
(see, for example, \cite[p.~176]{BoyaGora:97}),   
shows that the matrix elements of the block matrices $T^{(mn)}(\beta)$ are given in terms of the 
slopes $\gamma_k$, the intercepts $d_k$ of the branches $f_k$ of the map $f$,  
 and the topological transition matrix $(A_{kl})_{1 \leq k,l\leq N}$ induced by $f$ and $\mathcal{P}$
(see \Eref{appeqa}) as follows:
\begin{equation}\label{melements}
T_{kl}^{(mn)}(\beta)= 
\frac{A_{lk}}{|\gamma_l|^\beta \gamma_l^n}\cdot 
 (-d_l)^{n-m}
{n \choose n-m} \, .
\end{equation}
The eigenvalues are determined by the diagonal blocks 
$T^{(mm)}(\beta)$ with matrix elements given by
the first factor in \Eref{melements}. 

Note that since the underlying map is assumed to be topologically mixing, the matrices $T^{(mm)}(\beta)$ are 
irreducible and aperiodic for even $m$. The Perron-Frobenius Theorem 
(see, for example, \cite[p.53]{Gantmacher:2000} or \cite[p.536]{Lancaster:69}),
now guarantees that $T^{(mm)}(\beta)$ has a simple, positive eigenvalue, larger (in modulus) than all other eigenvalues, which we denote by $\nu_m(\beta)$.  
Now, $\nu_0(\beta)$ determines the topological pressure, given by
$P(\beta)=\ln\nu_0(\beta)$, which has the following well-known properties.

\begin{lem}\label{prop:TopPres}\noindent
\begin{itemize}
\item[(i)] $P(1) = 0$;
\item[(ii)]  $P(\beta)$  is a convex function of $\beta$; 
\item[(iii)] $P(\beta)$ is analytic in $\beta$; 
\item[(iv)] $P'(1)=-\Lambda$.
\end{itemize}
\end{lem}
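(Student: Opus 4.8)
The plan is to reduce all four assertions to elementary statements about the leading eigenvalue $\nu_0(\beta)$ of the $N\times N$ nonnegative matrix $T^{(00)}(\beta)$, whose entries by \Eref{melements} are $T^{(00)}_{kl}(\beta)=A_{lk}|\gamma_l|^{-\beta}=A_{lk}e^{-\beta\ln|\gamma_l|}$. Since $f$ is topologically mixing this matrix is primitive, so the Perron--Frobenius Theorem gives that $\nu_0(\beta)$ is simple and admits strictly positive left and right eigenvectors. Two structural facts will be used throughout: the Markov identity $\sum_k A_{lk}|I_k|=|\gamma_l|\,|I_l|$ (because $f(I_l)=\bigcup_{k:A_{lk}=1}I_k$ and $f$ has slope $\gamma_l$ on $I_l$); and, for an admissible length-$n$ word $\omega=(\omega_0,\dots,\omega_{n-1})$, the fact that $\gamma_\omega:=\prod_j\gamma_{\omega_j}$ is the value of $(f^n)'$ on the corresponding cylinder, so that $(T^{(00)}(\beta)^n)_{kl}=\sum_\omega|\gamma_\omega|^{-\beta}$, the sum over admissible length-$n$ words from $l$ to $k$. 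For (i) I would simply exhibit the positive left eigenvector $w=(|I_1|,\dots,|I_N|)$: the Markov identity gives $(w^\top T^{(00)}(1))_l=|\gamma_l|^{-1}\sum_k A_{lk}|I_k|=|I_l|$, so $w^\top T^{(00)}(1)=w^\top$; since the Perron eigenvalue is the only one with a positive eigenvector, $\nu_0(1)=1$ and $P(1)=0$. (The matching right eigenvector is the vector of values of the piecewise constant invariant density $\rho$ of $f$.)

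For (ii) and (iii) I would use the path-sum formula. By nonnegativity and irreducibility, $\nu_0(\beta)=\lim_{n\to\infty}\bigl(\mathbf{1}^\top T^{(00)}(\beta)^n\mathbf{1}\bigr)^{1/n}$, hence $P(\beta)=\lim_{n\to\infty}\tfrac1n\ln\sum_{\omega\in W_n}e^{-\beta\ln|\gamma_\omega|}$, where $W_n$ is the finite set of admissible length-$n$ words. For each fixed $n$ the map $\beta\mapsto\ln\sum_{\omega\in W_n}e^{-\beta\ln|\gamma_\omega|}$ is a finite log-sum-exp in $\beta$ and hence convex (a one-line consequence of Hölder's inequality), and a pointwise limit of convex functions is convex; this proves (ii). For (iii), the entries of $T^{(00)}(\beta)$ are entire functions of $\beta$ and $\nu_0(\beta)$ is a simple eigenvalue, so analytic perturbation theory for matrices (or, concretely, the implicit function theorem applied to $\det(T^{(00)}(\beta)-\nu I)=0$, whose $\nu$-derivative is nonzero at the simple root $\nu_0(\beta)$) yields that $\beta\mapsto\nu_0(\beta)$ is analytic; since $\nu_0(\beta)>0$, so is $P(\beta)=\ln\nu_0(\beta)$.

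For (iv) I would differentiate via the first-order perturbation formula for a simple eigenvalue: if $w^\top,v$ denote the left and right eigenvectors of $T^{(00)}(1)$ normalised so that $w^\top v=1$, then $\nu_0'(1)=w^\top\bigl(\partial_\beta T^{(00)}\bigr)(1)\,v$. Taking $w=(|I_k|)$ and $v=(\rho_l)$ from (i), the normalisation $w^\top v=\sum_l|I_l|\rho_l=1$ holds automatically because $\mu$ is a probability measure. Using $\partial_\beta T^{(00)}_{kl}(\beta)=-\ln|\gamma_l|\,A_{lk}|\gamma_l|^{-\beta}$ and collapsing the sum over $k$ with the Markov identity gives $\nu_0'(1)=-\sum_l\rho_l|I_l|\ln|\gamma_l|=-\int_I\ln|f'|\,d\mu=-\Lambda$, and therefore $P'(1)=\nu_0'(1)/\nu_0(1)=-\Lambda$.

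The only genuine bookkeeping hazard I anticipate is in (iv): one must keep the index conventions of \Eref{melements} straight, confirm that the Perron eigenvector of $T^{(00)}(1)$ to be paired against $w=(|I_k|)$ is the piecewise constant invariant density (so that $\sum_l\rho_l|I_l|\ln|\gamma_l|$ really equals $\int_I\ln|f'|\,d\mu=\Lambda$ as in \Eref{aa} and \Eref{eq:Lyapunov}), and check that $w^\top v=1$ is exactly the normalisation induced by $\mu$ being a probability measure. Everything else is Perron--Frobenius, a one-line Hölder/log-convexity argument, and standard analytic perturbation theory.
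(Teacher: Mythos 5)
Your proposal is correct and follows essentially the same route as the paper: for (ii) you apply H\"older/log-sum-exp convexity to the path sums of powers of $T^{(00)}(\beta)$, which is exactly the paper's trace-based H\"older argument, exploiting $T_{kl}(\beta_1+\beta_2)=T_{kl}(\beta_1)T_{kl}(\beta_2)$ and $T_{kl}(t\beta)=(T_{kl}(\beta))^t$. For (i), (iii) and (iv) the paper merely remarks that the claims follow "by similar elementary methods from the matrix representation" or from textbooks, and your explicit left-eigenvector computation with $w=(|I_k|)$, the analytic perturbation of the simple Perron root, and the first-order eigenvalue perturbation formula are precisely the intended elementary derivations, carried out correctly (including the index conventions and the normalisation $\sum_l\rho_l|I_l|=1$).
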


These properties have been established for a rather large class 
of dynamical systems (see, for example, \cite{Kell:98}). In our context, however, they follow easily 
from the matrix representation above using elementary methods. As an example, we shall derive property (ii).
To simplify notation, we shall denote the first block matrix $T^{(00)}(\beta)$ by $T(\beta)$ 
for the remainder of the argument.
Observing that $P(\beta)=\ln \nu_0(\beta)$ is obtained from
$\ln \mbox{Tr} \left((T(\beta))^n\right)/n$ as
$n$ tends to infinity, convexity of the topological pressure, that is, 
\[
P(\beta_1 (1-t)+ \beta_2 t) \leq
(1-t) P(\beta_1) + t P(\beta_2) \qquad (0\leq t\leq 1),
\] 
follows from a simple estimate using the H\"older inequality 
\begin{eqnarray*}
\fl & &\frac{1}{n} \ln  \mbox{Tr} \left(
(T(\beta_1(1-t)+\beta_2 t))^n \right)\nonumber\\
\fl &=& \frac{1}{n} \ln \sum_{l_0,l_1,\ldots, l_{n-1}}
T_{l_0 l _1}(\beta_1 (1-t) + \beta_2 t)
T_{l_1 l _2}(\beta_1 (1-t) + \beta_2 t)
\cdots
T_{l_{n-1} l _0} (\beta_1 (1-t) + \beta_2 t) 
\nonumber \\
\fl &=&  \frac{1}{n} \ln \sum_{l_0,l_1,\ldots, l_{n-1}}
\left(T_{l_0 l _1}(\beta_1)\right)^{1-t} 
\left(T_{l_0 l _1}(\beta_2)\right)^{t}
\cdots
\left( T_{l_{n-1} l _0}(\beta_1)\right)^{1-t}
\left( T_{l_{n-1} l _0}(\beta_2)\right)^{t} \nonumber \\
\fl 
&\leq& \frac{1}{n} \ln \left( \sum_{l_0,l_1,\ldots, l_{n-1}}
T_{l_0 l _1}(\beta_1) 
\cdots 
T_{l_{n-1} l _0}(\beta_1) \right)^{1-t}
\left(\sum_{l_0,l_1,\ldots l_{n-1}}
T_{l_0 l _1}(\beta_2) 
\cdots 
T_{l_{n-1} l _0}(\beta_2) \right)^t \nonumber \\
\fl
&=& (1-t) \frac{1}{n} \ln \mbox{Tr} \left((T(\beta_1))^n\right)
+ t \frac{1}{n}\ln \mbox{Tr} \left( (T(\beta_2))^n \right)\, .
\end{eqnarray*}
Here we have used that \Eref{melements} implies
$T_{kl}(\beta_1 +\beta_2)=
T_{kl}(\beta_1)T_{kl}(\beta_2)$ as well as
$T_{kl}(t \beta)=(T_{kl}(\beta))^t$. 
The other statements of the
lemma can be derived in a similar way, or can be found in standard textbooks 
on ergodic theory.

To establish a relation between the correlation decay, that is, 
between the eigenvalues of the operator \eref{eq:RPF} for $\beta=1$, and the 
Lyapunov exponent, note that the largest eigenvalue of the Perron-Frobenius operator
is given by $\nu_0(1)=1$, while 
$\nu_2(1)$ is a positive eigenvalue, which provides a lower bound for the 
subleading eigenvalue of $\mathcal{L}_{\beta}$.
Thus, on the one hand
\begin{equation}\label{ba}
\alpha_\mathcal{H} \leq -\ln \nu_2(1) \, .
\end{equation}
On the other hand $T^{(22)}(\beta)=T^{(00)}(\beta+2)$ by \Eref{melements}, which implies 
\begin{equation}\label{bb}
\nu_2(\beta)=\nu_0(\beta+2) \, .
\end{equation}
Hence,  using the properties of the topological pressure
in Lemma~\ref{prop:TopPres}, the relations \eref{ba} and \eref{bb}
yield 
\begin{equation}\label{eq:aH_lambda3}
\alpha_\mathcal{H} \leq -P(3)\leq (3-1) \Lambda \, .
\end{equation}
See \Fref{fig:TopPressure} for a graphical illustration of this result. 
\begin{figure}[h!]
  \centering
    \includegraphics[angle=0,width=.70\textwidth]{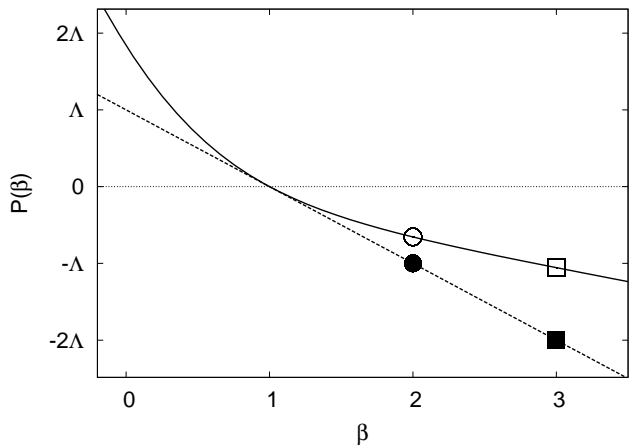}
  \caption{Schematic representation of the topological pressure $P(\beta)$ and 
graphical illustration of the estimates (\ref{eq:aH_lambda3}) and (\ref{eq:aH_lambda2}).
\label{fig:TopPressure}}
\end{figure}

Note that if all slopes $\gamma_k$ of $f$ have the same sign, then we have 
$T^{(11)}(\beta)
=\mbox{sign}(\gamma_k)T^{(00)}(\beta+1)$. Thus, 
we can apply the previous arguments to $|\nu_1(1)|$
to obtain the following improved estimate
\begin{equation}\label{eq:aH_lambda2}
\alpha_\mathcal{H} \leq  -\ln |\nu_1(1)| =  -\ln \nu_0(2) =  
-P(2)\leq (2-1) \Lambda \, .
\end{equation}
To summarise, we have shown the following. 

\begin{prop}\label{thm:main}
Let $f: I \rightarrow I$ denote a topologically mixing 
piecewise linear expanding Markov map.
If $\mathcal{H} = \mathcal{H}(D_R)$ denotes a space of piecewise analytic observables 
(see Appendix, Definition~\ref{def:Hspace}) then
the mixing rate is bounded in terms of the Lyapunov exponent $\Lambda$ with respect to the piecewise constant invariant density, by
\begin{equation}\label{ineq}
\alpha_{\mathcal{H}} \leq 2 \Lambda \, .
\end{equation}
If all slopes have the same sign the sharper estimate
\begin{equation}\label{sharp}
\alpha_{\mathcal{H}} \leq  \Lambda
\end{equation}
holds.
\end{prop}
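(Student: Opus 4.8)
The plan is to package the chain \eref{ba}--\eref{eq:aH_lambda3} (and its same-sign variant \eref{eq:aH_lambda2}) into three clean steps: (a) exhibit, besides the leading eigenvalue $1$, a genuine eigenvalue of $\mathcal{L}_1$ acting on $\mathcal{H}(D_R)$ whose modulus is computable from the diagonal blocks of \eref{blockmatrix}; (b) turn that eigenvalue into an upper bound for the mixing rate via the spectral description of correlation decay recorded in the Appendix; and (c) bound the resulting quantity by a multiple of $\Lambda$ using only the convexity of the topological pressure from Lemma~\ref{prop:TopPres}.

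For step (a), I would start from \eref{melements}: for the diagonal blocks one has $T^{(mm)}(\beta)=T^{(00)}(\beta+m)$ when $m$ is even, since then $\binom{m}{0}=1$ and $(-d_l)^0=1$, so in particular $\nu_2(\beta)=\nu_0(\beta+2)$. Topological mixing makes $T^{(00)}(\beta)$ irreducible and aperiodic, so $\nu_0(\beta)=e^{P(\beta)}$ is its simple dominant eigenvalue; since $f$ is expanding, $|\gamma_l|>1$, so passing from $\beta=1$ to any $\beta>1$ strictly shrinks the positive entries $A_{lk}/|\gamma_l|^{\beta}$ of $T^{(00)}(\beta)$, and the strict-monotonicity corollary of the Perron--Frobenius theorem (applied to the irreducible matrix $T^{(00)}(1)$) gives $\nu_0(\beta)<\nu_0(1)=1$. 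Hence $\nu_2(1)=\nu_0(3)\in(0,1)$ is a bona fide eigenvalue of $\mathcal{L}_1$ on $\mathcal{H}(D_R)$ distinct from the leading one, so the second largest eigenvalue $\lambda_1$ in modulus satisfies $|\lambda_1|\ge\nu_0(3)$. If all slopes share a sign $\sigma$, then additionally $T^{(11)}(1)=\sigma\,T^{(00)}(2)$ by \eref{melements}, which supplies the eigenvalue $\sigma\,\nu_0(2)$ of modulus $\nu_0(2)=e^{P(2)}\in(0,1)$, so in that case $|\lambda_1|\ge\nu_0(2)$.

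Step (b) is where I would lean on the Appendix: $\mathcal{L}_1$ restricted to $\mathcal{H}(D_R)$ is nuclear, so its nonzero spectrum is precisely the set of eigenvalues furnished by the blocks $T^{(mm)}(1)$, and the mixing rate equals $-\ln|\lambda_1|$ because there exist observables in $\mathcal{H}(D_R)$ whose spectral projection onto the $\lambda_1$-eigenspace does not vanish, so that their correlation function \eref{CorrelFunc} decays no faster than $|\lambda_1|^{n}$. Combining with step (a), $\alpha_{\mathcal{H}}=-\ln|\lambda_1|\le -\ln\nu_0(3)=-P(3)$ in general, and $\le -\ln\nu_0(2)=-P(2)$ when all slopes agree in sign. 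For step (c), Lemma~\ref{prop:TopPres}(ii)--(iv) says $P$ is convex with $P(1)=0$ and $P'(1)=-\Lambda$, so $P$ lies above its tangent at $1$: $P(\beta)\ge -(\beta-1)\Lambda$. Evaluating at $\beta=3$ gives $-P(3)\le 2\Lambda$, i.e.\ \eref{ineq}; evaluating at $\beta=2$ gives $-P(2)\le\Lambda$, i.e.\ \eref{sharp}.

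The main obstacle is step (b): justifying that the mixing rate is controlled by $-\ln|\lambda_1|$ rather than merely bounded by it in some loose sense, which requires (i) the nuclearity/spectral-decomposition statement for $\mathcal{L}_1$ on $\mathcal{H}(D_R)$ together with the matching block-triangular description of its spectrum, and (ii) the existence of a pair of observables genuinely detecting the subleading eigenvalue. I expect both to be handled in the Appendix, so that the argument in the main text collapses to the elementary matrix identities $T^{(22)}(\beta)=T^{(00)}(\beta+2)$ and $T^{(11)}(1)=\sigma\,T^{(00)}(2)$, the Perron--Frobenius monotonicity bound $\nu_0(\beta)<1$ for $\beta>1$, and the one-line tangent-line estimate from convexity of $P$; everything else is routine.
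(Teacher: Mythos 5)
Your proposal is correct and follows essentially the same route as the paper: the block identities $T^{(22)}(\beta)=T^{(00)}(\beta+2)$ and $T^{(11)}(\beta)=\mathrm{sign}(\gamma_k)\,T^{(00)}(\beta+1)$, the Perron--Frobenius structure of the diagonal blocks, the spectral description of $\mathcal{L}_\beta$ on $\mathcal{H}(D_R)$ from the Appendix, and the tangent-line estimate $P(\beta)\geq -(\beta-1)\Lambda$ from Lemma~\ref{prop:TopPres} evaluated at $\beta=3$ and $\beta=2$. The point you flag as the main obstacle --- that some pair of observables genuinely detects the eigenvalue $\nu_2(1)$ so that $\alpha_{\mathcal{H}}\leq -\ln\nu_2(1)$ --- is indeed the one step the paper also treats only implicitly via the standard spectral decomposition of the correlation function.
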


\begin{rem}
The assumption that $f$ be topologically mixing is sufficient but not
necessary. Indeed, there exist piecewise linear expanding Markov
maps $f$ with the following properties: the map $f$ is not topologically
mixing, yet exhibits exponential decay of correlations and
the conclusions of Proposition \ref{thm:main} hold.
\end{rem}

The simplest examples for which the bounds are achieved are
the tent map ($\alpha_{\mathcal{H}}=2\Lambda$) and the doubling map
($\alpha_{\mathcal{H}}=\Lambda$). 
It is finally worth mentioning that the condition $P(1)=0$, that is, the absolute continuity
of the reference measure with respect to Lebesgue measure, is not essential
for Proposition~\ref{thm:main} to be valid. The conclusions also hold, for instance, for certain Gibbs measures with respect to piecewise 
constant potentials.

\section{Remarks on general expanding Markov maps}\label{sec:3}

The setup of piecewise linear Markov maps is rather special. One may thus be tempted 
to ask whether a result like Proposition~\ref{thm:main} extends, say, to Markov maps
with finite curvature. The previous considerations are based 
on \Eref{bb}, a result which has been exploited previously in a slightly more
restricted setup \cite{BaHeMePo_PRA88} to conjecture an exact relation between 
correlation decay and generalised Lyapunov exponents. It has already been
demonstrated that such an identity breaks down for maps with finite curvature
\cite{ChPaRu_PRL90}. This fact, taken on its own, however, does not prevent the validity of a generalisation of
Proposition~\ref{thm:main}, making a study of how curvature
affects the previous considerations a worthy task.

Using an Ulam-like construction any expanding Markov map can be approximated 
by a piecewise linear map (see, for example, \cite{BaIsSc_AIHP95}). 
Let $F:I\rightarrow I$ denote a piecewise expanding, but  
not necessarily linear Markov map with Markov
partition $\{I_1,\ldots,I_N\}$. Using the cylinder sets $U_{i_0,\ldots, i_{n-1}}
= \cap_{k=0}^{n-1} F^{-k}(I_{i_k})$ we may introduce a piecewise linear approximation
$f_n: I\rightarrow I$ by the following construction. The map $f_n$ linearly interpolates
$F$ on each cylinder set, that is, $f_n(U_{i_0,\ldots, i_{n-1}}) = U_{i_1,\ldots, i_{n-1}}$
(see \Fref{fig:Counterexample}(a)).  It is a straightforward exercise 
to show that the Lyapunov exponent of $f_n$ tends to the Lyapunov exponent of $F$ (with respect to the absolutely continuous invariant measure) as $n$ tends to infinity. 
However, the analysis of the mixing rate requires greater care.
While Proposition~\ref{thm:main} is still valid for any order $n$ of the approximation, it is far from obvious whether the proposition
is valid for a general expanding map.

To illustrate this point we consider a simple example,
a family of full branch piecewise M\"obius maps $F_c$ defined  on $[-1,1],$ 
\begin{equation}\label{oscarmap}
F_c(x)=\frac{1-2(c+1)|x|}{1+2c|x|} \, .
\end{equation}
We restrict the parameter to $c \in (-1/4,1/2)$, in order to guarantee expansivity.
\Fref{fig:Counterexample}(a) depicts the map $F_c$ for $c=-0.22$.

The leading part of the spectrum of the corresponding Perron-Frobenius
operator considered
on the space of analytic observables
can be approximated using a spectral approximation method. The basic idea of this method is to approximate 
$\mathcal{L}_{\beta}$ by an $n\times n$ square matrix $\Pi_n\mathcal{L}_{\beta}\Pi_n$, 
where $\Pi_n$ denotes the projector that sends a function to its Lagrange-Chebyshev interpolating polynomial of degree $n-1$.
This method is easily implemented and, moreover, it is possible to show (see \cite{Oscar}) that 
the eigenvalues of $\Pi_n\mathcal{L}_{\beta}\Pi_n$ converge exponentially fast to the 
eigenvalues of $\mathcal{L}_{\beta}$. 
Using this method the leading eigenvalues 
of the Perron-Frobenius operator and their dependence on $c$ are
easily obtained (see \Fref{fig:Counterexample}(b)). 
A minimum for the subleading eigenvalue occurs at about
$c=-0.11$.
The corresponding numerical value
reads $\lambda_1 \approx 0.10415 $ resulting in a mixing rate
$\alpha_{\mathcal{H}}=-\ln |\lambda_1| \approx 2.2619$. The corresponding Lyapunov 
exponent (which hardly depends on the parameter $c$) is computed using
the numerical approximation
of the invariant density. The numerical value is 
$\Lambda \approx 0.685$ so that the inequality \eref{ineq} is clearly violated.

\begin{figure}[h!]

$
\hspace{-.08\textwidth}
\begin{array}{cc}
\includegraphics[width=.65\textwidth]{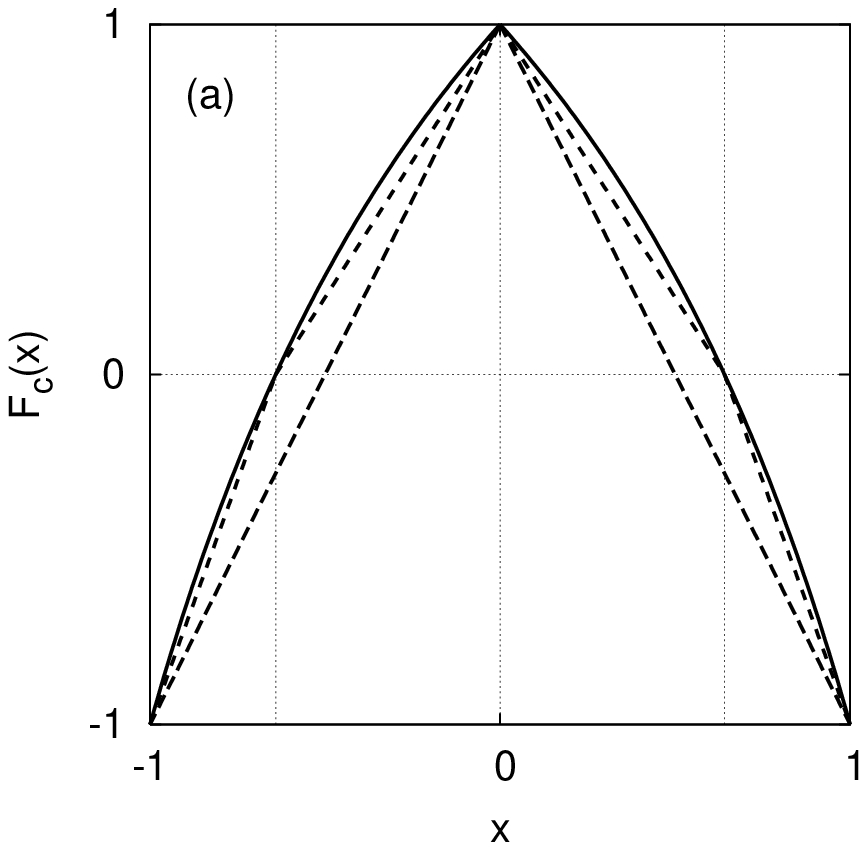} &
\hspace{-.12\paperwidth}
\includegraphics[width=.65\textwidth]{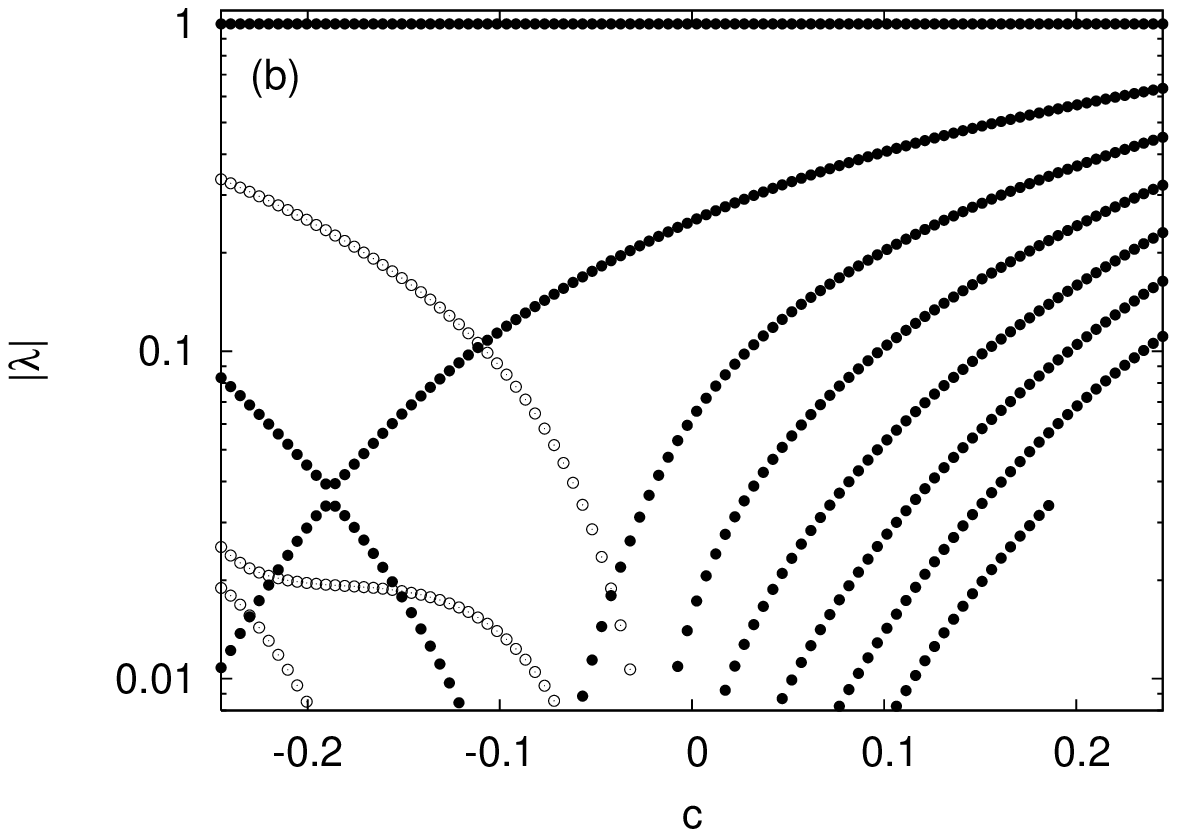}
\end{array}$
\caption{Left: (a) Diagrammatic view of the  M\"obius map \eref{oscarmap} for $c<0$ (solid).
Dotted lines indicate the cylinder sets of the first generations. Broken lines
show the first two piecewise linear approximations $f_{1}$ and $f_{2}$. \newline
Right: (b) The largest eigenvalues (in modulus) of the Perron-Frobenius
operator for the map $F_c$ in dependence on $c$, as obtained using the 
Lagrange-Chebyshev method with truncation order $n=25$. Positive/negative
eigenvalues are indicated by filled/open symbols. \label{fig:Counterexample}}
\end{figure}

In order to understand why the reasoning at the beginning of this section fails, that is, 
why an approximation of the map $F_c$ by a piecewise linear Markov map fails to produce the correct mixing rate, let us consider increasingly finer
piecewise linear approximations $f_n$ of the map \eref{oscarmap}, 
see \Fref{fig:Counterexample}(a).
For each map $f_n$ the Perron-Frobenius operator restricted 
to piecewise polynomial functions has a finite matrix representation (see \eref{blockmatrix}). 
For the remainder of the section we shall only deal with the case 
$\beta = 1$ and refer to $T(\beta)$ (and $\nu_m(\beta)$) as $T$ (and $\nu_m$).
\Fref{fig:EVs_Efunctions}(a) shows the numerical results
for the leading eigenvalues of the diagonal block $T^{(11)}$
and  $T^{(22)}$, that is, $\nu_{1}$ and
$\nu_{2}$, respectively, for increasing level of approximation $n=1,\ldots,6$. For comparison we display the subleading eigenvalue 
$\nu^{(subl)}_0$ of $T^{(00)}$ as well.
For every level $n$ of approximation, $\nu_{2}$ gives the subleading 
eigenvalue $\lambda_1$ of the Perron-Frobenius operator of the piecewise
linear approximation, and these values seem to converge as $n$ tends to infinity.
The values and the limit are  larger than $\exp(-2\Lambda)$, meaning that the 
inequality \eref{ineq} in the Proposition \ref{thm:main} is satisfied, as expected. 
The eigenfunction $u_n:[-1,1]\to \mathbb{R}$ of the corresponding eigenvalue 
$\nu_{2}$ is a quadratic polynomial on each element of the partition, but
it develops an increasing number of discontinuities between different intervals of 
the increasingly finer partition. 
These eigenfunctions do not seem to converge to a smooth limit 
(see \Fref{fig:EVs_Efunctions}(b)). In fact, unlike the invariant density there is no
reason why the limit should be smooth. The numerical experiment suggests that we end
up with a function of bounded variation.

\begin{figure}[h!]

\hspace{-0.6\textwidth}
\begin{center}
\includegraphics[width=1\textwidth]{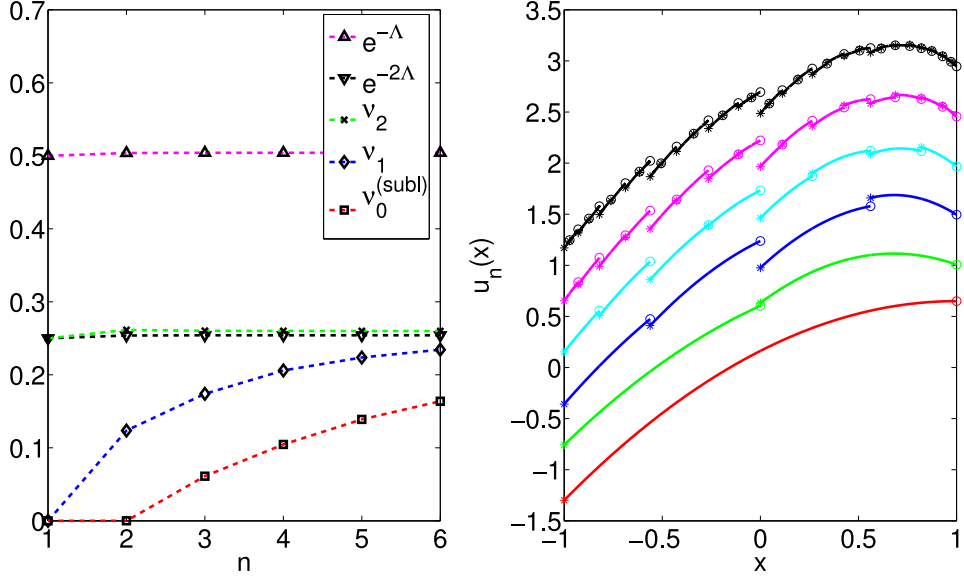}
\end{center}
 \caption{Left: (a) Leading eigenvalues 
$\nu_1$ and $\nu_2$ of the  corresponding matrix blocks
$T^{(11)}$ and $T^{(22)}$
for a piecewise linear approximation of the map \eref{oscarmap} with $c=-0.11$, as a function
of the level of approximation. The subleading eigenvalue of  
$T^{(00)}$, $\nu_0^{(subl)}$, is displayed as well.
For comparison, $\exp(-\Lambda)$ and $\exp(-2\Lambda)$ are depicted as well.
The broken lines are a guide for the eye. \newline
Right:  (b) Eigenfunction $u_n$ corresponding to $\nu_2$ for 
$n=1,\ldots,6$ with normalisation $\int_I|u_n(x)|\, dx=1$. For clarity, successive
approximations are shifted by $0.5$. The open symbols indicate the discontinuity set of the
eigenfunction, i.e., the increasingly finer Markov partition of the piecewise linear approximaton. 
\label{fig:EVs_Efunctions}}
\end{figure}

It is indeed possible to show and perhaps well known that an
estimate like \Eref{sharp} holds for discontinuous observables.
For that purpose let us consider the Perron-Frobenius operator 
$\mathcal{L}_1$ on the space  of functions of bounded 
variation.
Recall that a function $f:[-1,1] \to \mathbb{R}$ is of bounded variation if it has finite total variation 
$\text{var}(f) = \sup \big\{ \sum_{i=1}^{p} |f(x_i)-f(x_{i-1})|:-1\leq x_0\leq \ldots \leq x_p \leq 1 \big \} < \infty$.
In this setup, the spectrum of the Perron-Frobenius operator associated with expanding maps has been studied in detail (see \cite{Kell_CMP84}).  
In particular, there is an explicit formula for the essential spectral radius given by 
\begin{equation*}
\sigma_{ess} = \lim_{k\to \infty} (\inf \{|(F_c^k)'(x)| : x \in[-1,1] \})^{-1/k} \, .
\end{equation*}
Thus, we have an upper bound for the mixing rate
\begin{equation*}
\alpha_{BV}\leq -\ln \sigma_{ess}=  \lim_{k\to \infty}
\frac{1}{k} \ln \inf \{|(F_c^k)'(x)| : x \in[-1,1] \} \,,
\end{equation*}
which yields the following estimate for the Lyapunov exponent:
\begin{eqnarray*}
\Lambda &=& \frac{1}{k} \int_I \ln |(F_c^k)'(x)| \,d\mu\nonumber \\
& \geq& \frac{1}{k}
\inf \{\ln|(F_c^k)'(x)| : x \in[-1,1] \} \int_I d \mu\nonumber \\
&=& \frac{1}{k} \ln \inf \{|(F_c^k)'(x)| : x \in[-1,1] \} \,. 
\end{eqnarray*}
Thus, for observables of bounded variation we have the following result. 

\begin{prop}\label{bvcor}
Let $f: I \rightarrow I$ be a piecewise monotonic smooth expanding interval map which is mixing with respect to its unique absolutely continuous 
invariant measure.  Then the rate of decay of correlations for
functions of bounded variation is bounded by the Lyapunov exponent
\begin{equation}\label{bvest}
\alpha_{BV}\leq \Lambda \, .
\end{equation}
\end{prop}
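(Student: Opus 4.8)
The plan is to split inequality \eqref{bvest} into a \emph{spectral} half, $\alpha_{BV}\le-\ln\sigma_{ess}$, and an elementary \emph{arithmetic} half, $-\ln\sigma_{ess}\le\Lambda$; the latter is exactly the chain of estimates displayed just before the statement. For the spectral half I would invoke the standard Lasota--Yorke mechanism: the transfer operator $\mathcal{L}_1$ acting on the space $BV$ of functions of bounded variation on $I$ is quasi-compact, and by Keller's analysis \cite{Kell_CMP84} its essential spectral radius is $\sigma_{ess}=\lim_{k\to\infty}(\inf\{|(f^{k})'(x)|:x\in I\})^{-1/k}$. Since $f$ is mixing with respect to its unique absolutely continuous invariant measure $\mu=\rho_{*}\,dm$ ($m$ being Lebesgue measure), the peripheral spectrum of $\mathcal{L}_1$ consists only of the simple eigenvalue $1$, with eigenfunction $\rho_{*}$; write $\mathcal{L}_1=\mathcal{P}+\mathcal{N}$, where $\mathcal{P}h=(\int_I h\,dm)\,\rho_{*}$ is the associated rank-one spectral projection, and set $r:=$ spectral radius of $\mathcal{N}$, so that $\sigma_{ess}\le r<1$.

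Next I would translate correlation decay into the action of $\mathcal{N}$. Using the defining duality of $\mathcal{L}_1$ and $\mathcal{L}_1\rho_{*}=\rho_{*}$, for $\varphi,\psi\in BV$ one gets
\[
C_{\varphi,\psi}(n)=\int_{I}\varphi\,\mathcal{L}_1^{n}(\psi\rho_{*})\,dm-\Big(\int_I\varphi\,d\mu\Big)\Big(\int_I\psi\,d\mu\Big)=\int_{I}\varphi\,\mathcal{N}^{n}(\psi\rho_{*})\,dm ,
\]
which makes sense because $\psi\rho_{*}\in BV$ (a product of two bounded functions of bounded variation on a compact interval is of bounded variation) and $\mathcal{P}(\psi\rho_{*})=(\int_I\psi\,d\mu)\rho_{*}$. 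One then needs to exhibit a single pair for which this does not decay faster than $r^{n}$. When $r$ is attained at an eigenvalue $\lambda$ of $\mathcal{L}_1$ with eigenfunction $u\in BV$, note $\int_I u\,dm=0$ and write $u=\psi\rho_{*}$ with $\psi=u/\rho_{*}\in BV$ — legitimate because the invariant density of a mixing piecewise expanding map is bounded below by a positive constant, so $1/\rho_{*}\in BV$; then $\mathcal{N}^{n}(\psi\rho_{*})=\lambda^{n}u$, and pairing with $\varphi=\bar u$ gives $|C_{\varphi,\psi}(n)|=r^{n}\|u\|_{L^{2}}^{2}$, so $\alpha_{\varphi,\psi}=-\ln r$. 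When $r=\sigma_{ess}$ with no such eigenvalue, one instead appeals to the (known, if more delicate) fact that the essential spectral radius cannot be simultaneously beaten by all $BV$ observables. Either way $\alpha_{BV}\le-\ln r\le-\ln\sigma_{ess}$.

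For the arithmetic half, the chain rule and $f$-invariance of $\mu$ give $k\Lambda=\int_{I}\ln|(f^{k})'|\,d\mu\ge\mu(I)\cdot\inf_{x\in I}\ln|(f^{k})'(x)|=\ln\inf_{x\in I}|(f^{k})'(x)|$ for every $k\ge1$ (using $\mu(I)=1$, monotonicity of $\ln$ on $(0,\infty)$, and $\inf_{x}|(f^{k})'(x)|>1$ since $f$ is expanding). Dividing by $k$ and letting $k\to\infty$ — the limit exists because $k\mapsto\ln\inf_{x}|(f^{k})'(x)|$ is superadditive — yields $\Lambda\ge\lim_{k\to\infty}\frac1k\ln\inf_{x\in I}|(f^{k})'(x)|=-\ln\sigma_{ess}$. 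Combined with the bound from the previous paragraph this gives $\alpha_{BV}\le-\ln\sigma_{ess}\le\Lambda$, which is \eqref{bvest}.

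The hard part is the passage from the spectral picture to the statistical one in the $BV$ category: because $\alpha_{BV}$ is an \emph{infimum} over observables, the inequality we want runs against the easy direction, so one has to produce at least one pair of bounded-variation observables whose correlations decay \emph{no faster} than $\sigma_{ess}^{n}$. This is precisely the ingredient quoted above as ``perhaps well known'' and credited to \cite{Kell_CMP84}; the auxiliary facts I would lean on (quasi-compactness of $\mathcal{L}_1$ on $BV$, simplicity of the leading eigenvalue under mixing, and the lower bound $\rho_{*}\ge\mathrm{const}>0$ on the density) are all standard, and the arithmetic estimate is elementary.
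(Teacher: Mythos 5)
Your proposal is correct and follows essentially the same route as the paper: Keller's formula for the essential spectral radius of $\mathcal{L}_1$ on $BV$, the bound $\alpha_{BV}\leq-\ln\sigma_{ess}$, and the elementary estimate $\frac{1}{k}\ln\inf_x|(f^k)'(x)|\leq\Lambda$ obtained from the chain rule and invariance of $\mu$. The only difference is that you attempt to justify the spectral half $\alpha_{BV}\leq-\ln\sigma_{ess}$ explicitly via the spectral projection decomposition, whereas the paper (like your fallback for the case where no peripheral eigenvalue attains the rate) delegates this to the literature \cite{Kell_CMP84,CoEc_JSP04}.
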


In fact, almost identical statements can be found in 
\cite{CoEc_JSP04}, for example, Corollary~9.2. 

\section{Conclusion}\label{sec:4}

There is no simple, straightforward answer to the question about the relation between
Lyapunov exponents and mixing rates. On formal grounds one may argue 
that both quantities probe entirely different and independent aspects of 
a dynamical system, and that no particular relation should be expected.
Lyapunov exponents are determined by properties related to the largest eigenvalue
of the Perron-Frobenius operator. By contrast, correlation decay
depends crucially on properties of the observables, with mixing
rates being related to the subleading part of the spectrum. 
Thus, abstract operator theory on its own does not seem to provide further insight into the  
relation between both quantities. Witness, for example, the doubling map viewed as an analytic map 
on the unit circle.
While its Lyapunov exponent is finite, the Perron-Frobenius operator
has no nontrivial eigenvalue when considered on the space of analytic
functions, that is, correlations of analytic observables decay faster than any exponential
(see, e.g., \cite{Bala:00}).

The argument outlined above, however, is a bit too simplistic. In fact, 
our results on
piecewise linear expanding Markov maps observed via piecewise analytic functions or  general piecewise smooth expanding maps observed via functions of bounded
variation suggest that bounds on the mixing rate in terms of
Lyapunov exponents can be derived provided that specific properties of
the underlying dynamical system are taken into account. 
Estimates of this type rely on nontrivial lower bounds for spectra. As such, they are complementary to
estimates which are available for proving the existence of spectral gaps,
and will thus require completely different approaches.

It turns out that the spirit of the result contained in Proposition \ref{bvcor} 
can be understood by considering 
an observable with a single discontinuity. In order to substantiate this claim 
we have performed numerical simulations on the map \eref{oscarmap} for $c=-0.11$.
We have computed the autocorrelation function \eref{CorrelFunc} for the
observable $\varphi=\psi=\phi_h$ with $\phi_h(x)=x$ if $|x|<1/2$ and
$\phi_h(x)=x-\mbox{sign}(x) h$ if $|x|>1/2$, having a discontinuity of stepsize $h$
at $|x|=1/2$ (see \Fref{fig:4}). Choosing $h=0$, the corresponding observable $\phi_h$ is analytic and the
correlation decay is seen to follow the subleading eigenvalue $\lambda_1$ of the 
Perron-Frobenius operator defined on the space of analytic functions 
(see \Fref{fig:Counterexample}(b)). In the discontinuous case corresponding to $h\neq 0$
the short time initial decay of the correlations still follows the pattern of the 
analytic observable, but
the correlation function now develops an exponential tail which obeys \Eref{bvest}.
The tail becomes more pronounced if the stepsize increases. 
In fact, the mixing rate seems to be very close to the Lyapunov exponent. 
Revisiting the considerations leading to Proposition \ref{bvcor},
it is tempting to surmise that this coincidence is a consequence of large deviation properties of finite time Lyapunov exponents,
since the expression for the essential spectral radius involves
an extreme value of a finite time Lyapunov exponent. 
Thus, the relation between Lyapunov exponents and mixing rates for
observables of bounded variation could be viewed to arise from the same 
mechanism already exploited in \cite{BaHeMePo_PRA88} for analysing the simple case mentioned 
in the introduction.

\begin{figure}[h!]
\begin{center}
\includegraphics[width=.75\textwidth]{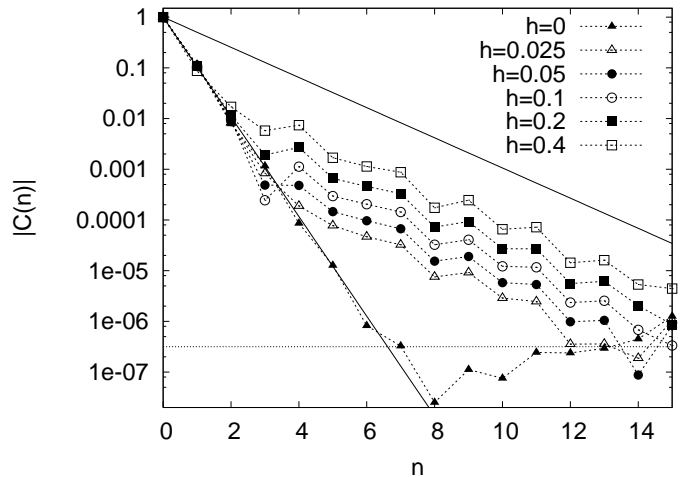}
\end{center}
\caption{Normalised autocorrelation function of the discontinuous variable 
$\phi_h$ with different
stepsizes $h$ for the map $F_c$ \eref{oscarmap} with $c=-0.11$. The straight lines represent an
exponential decay with rate $\ln|\lambda_1|$ and $-\Lambda$. Ergodic averages have been
computed as time averages of a series of length $2\times 10^4$ for $5\times 10^8$ uniformly
distributed initial conditions, skipping a transient of length $100$. The horizontal dotted
line indicates the order of magnitude of statistical errors induced by the finite
ensemble size.\label{fig:4}}
\end{figure}

This simple demonstration gives support to the folklore that correlation 
decay is linked with Lyapunov exponents.
Even if a real world phenomenon is sufficiently well-modelled by a smooth dynamical
system for which to date no link between correlation decay and
Lyapunov exponents can be established, one should keep in mind that modern data processing 
inevitably involves digital devices, which correspond to discontinuous observations.
Therefore, in formal terms observables of bounded variation could be the
relevant class for applications and in these cases Proposition \ref{bvcor} applies.
 
At an intuitive level it is easy to understand  
why discontinuous observations
result in correlation decay related to Lyapunov exponents. A discontinuous
observable is able to distinguish between different ``microstates'' at a ``macroscopic'' level, that is, a discontinuous observation is able to
distinguish two states at infinitesimal distance. As the 
distance between two nearby phase space points separated by a discontinuity
grows according to the Lyapunov exponent, 
the sensitivity at the
microscale may be transported to the macroscale, that is, it may filter through to the
correlation function by a discontinuous observation.

In our context the mathematical challenge is 
to establish a relation between mixing rates and Lyapunov exponents for
natural classes of observables, for example, full branch analytic interval maps 
observed via analytic functions. Besides the need for developing tools to
obtain lower bounds for spectra, establishing the relation alluded to above also requires 
a deeper understanding of which dynamical feature causes the point spectrum of
the Perron-Frobenius operator, that is, which signature of the microscopic dynamics 
survives if viewed via analytic observables. This is reminiscent of coarse-graining approaches in statistical mechanics,
for example, the introduction of collective coordinates and quasi-particles. Thus,
tackling the mathematical problem above may well shed some light 
on some of the most fundamental problems in contemporary nonequilibrium statistical physics
of complex systems.

\section*{Acknowledgements}
W.J. gratefully acknowledges support by EPSRC (grant no.\ EP/H04812X/1) and DFG (through SFB910),
as well as the kind hospitality by Eckehard Sch{\"o}ll and his group during the stay 
at TU-Berlin.

\appendix
\setcounter{section}{1}
\renewcommand\thesection{\Alph{section}}

\section*{Appendix: Spectral properties of Markov maps}\label{sec:Appendix}
In this section we shall provide a short account of the technical 
details
to make the results of Section~\ref{sec:2} rigorous. 
The main thrust 
of the argument is to define a suitable 
function space on which the generalised Perron-Frobenius operator
is compact. Results of this type for general analytic Markov maps are not new (see, for example, \cite{ruelle76} or
\cite{mayer84}). The special case of piecewise linear
Markov maps discussed below, where a complete determination of the
spectrum is possible, is probably well known to specialists in the
field. Unfortunately, we are at loss to provide a reference for the results
in Proposition~\ref{app:prop2} below, so we will outline a proof for the
convenience of the reader.  

To set the scene we define what is meant by a piecewise linear Markov map. 
Before doing so we note that by a \oemph{partition} of a closed 
interval $I$ we mean a finite collection of closed
intervals $\{I_1,\ldots,I_N\}$ with disjoint 
interiors, that is, 
$\mbox{int}(I_k)\cap\mbox{int}(I_l)$ for $k\neq l$, such that 
$\bigcup_{k=1}^NI_k=I$. 

\begin{defn}\label{defn:MarkovMap}
An interval map $f:I\rightarrow I$ is said to 
be a \oemph{Markov map} if there exists 
a finite partition $\{I_k\}_{k=1}^{N}$ of $I$ such that 
for any pair $(k,l)$ either
$f(\mbox{int}(I_k)) \cap \mbox{int}(I_l) = \emptyset$ or 
$\mbox{int}(I_l) \subseteq f(\mbox{int}(I_k))$. If this is the case, 
the corresponding partition will be referred to 
as a \oemph{Markov partition} and  
the $N\times N$ matrix $A$ given by 
\begin{equation}\label{appeqa} A_{k l}=
  \begin{cases}
    1 & \text{if  $\mbox{int}(I_l) \subseteq f(\mbox{int}(I_k))$} \\
    0 & \text{otherwise}
  \end{cases}
\end{equation}
will be called the \oemph{topological transition matrix} of the Markov map $f$. 

A Markov map $f$ with Markov partition $\{I_k\}_{k=1}^{N}$ is said to be 
\oemph{expanding} if $|f'(x)|>1$ for all $x\in \mbox{int}(I_k)$. It is
said to be \oemph{piecewise linear} if $f'$ is constant on each element of the 
Markov partition, that is, $f'(x)=\gamma_k$ for all $x\in \mbox{int}(I_k)$.

Finally, we call an expanding Markov map with topological transition matrix $A$ 
\oemph{topologically mixing}\footnote{This is a slight abuse
of terminology, since its use is usually restricted to continuous maps. However, it serves the same purpose as in the continuous setup 
as it guarantees the existence of a spectral gap for the corresponding transfer operator (see Corollary~\ref{app:cor}).} 
if there is a positive integer $p$ such that each 
entry of the matrix $A^p$ is strictly positive. 
\end{defn}

In what follows we shall concentrate on 
topologically mixing piecewise linear expanding Markov
maps. Our aim is to define suitable spaces of observables on which 
the associated \oemph{generalised Perron-Frobenius operator} 
or \oemph{transfer operator}  
\begin{equation}\label{app:eq:RPFapp}
 (\mathcal{L}_{\beta} h)(x) = \sum_{y \in f^{-1}(x)} \frac{h(y)}{|f^{\prime}(y)|^{\beta}}
\end{equation}
(see \Eref{eq:RPF}) is well defined and has nice spectral properties. 
It turns out that these spaces 
can be chosen from spaces of functions which 
are piecewise analytic. 

In order to define these spaces we require some more notation. 

\begin{defn}\label{def:Hspace}
Let $D$ denote an open disk in the complex plane. 
\begin{itemize}
  \item[(i)] We write $H^\infty(D)=\all{h: D \rightarrow \mathbb{C} }
                {h \mbox{ holomorphic and  $\sup_{z \in D}|h(z)|<\infty$}}$
     to denote the space of bounded holomorphic functions on $D$. 
     This is a Banach space when equipped with the 
      norm $\norm{h}{H^\infty(D)}=\sup_{z \in D}|h(z)|$.
   \item[(ii)] We use $\mathcal{H}(D)=\bigoplus_{k=1}^N H^\infty(D)$ to denote the space of 
            $N$-tuples $(h_1,\ldots,h_N)$ of bounded holomorphic
            functions on $D$. This is a
             Banach space when 
             equipped with the norm $\norm{h}{\mathcal{H}(D)}=\max \{ \|h_k\|_{H^\infty(D)} : k=1,\ldots, N\}$.   
  \end{itemize}
\end{defn}

The desired space of observables will now be defined by linking the 
disk $D$ occurring in the definition above 
to the dynamical system as follows. 
Given a piecewise linear expanding Markov map with 
Markov partition $\{I_k\}_{k=1}^{N}$, let us 
denote by $\varphi_{kl}: I_k \rightarrow I_l$ the inverse branch of the Markov map from partition element $I_k$ into the partition element 
$I_l$ as well as its obvious analytic continuation to the complex plane. Observe now that, since the map is expanding, 
all inverse branches are contractions. We can thus choose two concentric disks 
$D_r$ and $D_R$ of radius $r>0$ and $R>r$, respectively, 
such that 
\begin{equation}
\label{app:eq:adapted}
\varphi_{kl}(D_R)\subset D_r  \quad \text{for all inverse branches $\varphi_{kl}$}\,.
\end{equation}

It turns out that ${\mathcal H}(D_R)$ is a suitable space of observables for the map, in the sense that the associated 
transfer operator (\ref{app:eq:RPFapp}) is a well defined bounded operator on ${\mathcal H}(D_R)$. 
This is the content of the following result. 

\begin{prop}\label{app:prop1}
Given a piecewise linear expanding Markov map $f$ with 
topological transition matrix $A$ and 
inverse branches $\varphi_{kl}$, suppose that the disk $D_R$ is 
chosen as above. 
Then, for any real $\beta$, the transfer operator ${\mathcal L}_\beta$ 
is a well defined bounded operator 
from ${\mathcal H}(D_R)$ into itself and is given by
\begin{equation}\label{app:eq1}
\left({\mathcal L}_{\beta} h\right)_{k}(z) =\sum_{l}A_{lk}|\varphi_{kl}'(z)|^\beta
h_l(\varphi_{kl}(z)) \, .
\end{equation}
\end{prop}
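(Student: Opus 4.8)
The plan is to verify the three claims in turn: that formula \eqref{app:eq1} is the correct expression for $\mathcal{L}_\beta$ acting on a piecewise analytic observable, that the resulting tuple lies in $\mathcal{H}(D_R)$, and that the operator so defined is bounded. First I would unwind the definition \eqref{app:eq:RPFapp} on the invariant subspace. A tuple $h = (h_1,\ldots,h_N) \in \mathcal{H}(D_R)$ is to be read as the piecewise function which equals $h_l$ on $I_l$; for $x \in \operatorname{int}(I_k)$ the preimages $y \in f^{-1}(x)$ are precisely the points $\varphi_{kl}(x)$ with $A_{lk}=1$, one in each partition element $I_l$ that maps onto $I_k$. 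Since $f$ is piecewise linear, $|f'(y)|^{-\beta} = |\varphi_{kl}'(x)|^{\beta} = |\gamma_l|^{-\beta}$ is constant on $I_k$, so substituting into \eqref{app:eq:RPFapp} gives exactly the $k$-th component in \eqref{app:eq1}; the branch $\varphi_{kl}$ extends holomorphically to all of $D_R$ (indeed to $\mathbb{C}$) by linearity, and the factor $|\varphi_{kl}'|^\beta$ is a positive constant, so $(\mathcal{L}_\beta h)_k$ is a finite sum of functions each holomorphic on $D_R$.

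Next I would check that $(\mathcal{L}_\beta h)_k \in H^\infty(D_R)$. By the adaptedness condition \eqref{app:eq:adapted}, each $\varphi_{kl}$ maps $D_R$ into $D_r \subset D_R$, so $h_l \circ \varphi_{kl}$ is a well-defined bounded holomorphic function on $D_R$ with $\sup_{z \in D_R}|h_l(\varphi_{kl}(z))| \le \sup_{w \in D_r}|h_l(w)| \le \|h_l\|_{H^\infty(D_R)}$. Hence
\begin{equation*}
\|(\mathcal{L}_\beta h)_k\|_{H^\infty(D_R)} \le \sum_{l} A_{lk} |\varphi_{kl}'|^\beta \, \|h_l\|_{H^\infty(D_R)} \le \Big( \max_{k} \sum_{l} A_{lk} |\varphi_{kl}'|^\beta \Big) \|h\|_{\mathcal{H}(D_R)}.
\end{equation*}
Taking the maximum over $k$ shows $\mathcal{L}_\beta h \in \mathcal{H}(D_R)$ and that $\mathcal{L}_\beta$ is bounded with the displayed constant; linearity of $h \mapsto \mathcal{L}_\beta h$ is immediate from \eqref{app:eq1}, so $\mathcal{L}_\beta$ is a bounded linear operator on $\mathcal{H}(D_R)$.

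I do not expect a genuine obstacle here — the statement is essentially a bookkeeping exercise — but the one point that needs care is the interpretation of \eqref{app:eq:RPFapp} for a piecewise-defined $h$: a priori $\mathcal{L}_\beta h$ need only be defined up to the finite set of partition endpoints, and one must make sure that identifying the $k$-th holomorphic branch $(\mathcal{L}_\beta h)_k$ via the particular inverse branches landing in $I_k$ is consistent with how $f^{-1}$ decomposes over $\operatorname{int}(I_k)$. This is exactly where topological mixing, or rather just the Markov property, is used: over a fixed $\operatorname{int}(I_k)$ the inverse-branch decomposition is locally constant (no branch point of $f$ lies in the interior), so the sum in \eqref{app:eq:RPFapp} reorganises unambiguously into the sum in \eqref{app:eq1}. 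Once that is noted, everything else is the Cauchy-type estimate above.
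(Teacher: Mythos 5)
Your proposal is correct and follows essentially the same route as the paper: unwind the definition of $\mathcal{L}_\beta$ over the inverse branches, use that $|\varphi_{kl}'|^\beta=|\gamma_l|^{-\beta}$ is constant and that the adaptedness condition \eqref{app:eq:adapted} keeps $h_l\circ\varphi_{kl}$ holomorphic and bounded on $D_R$, and conclude boundedness via the estimate $\|\mathcal{L}_\beta\|\leq\max_k\sum_l A_{lk}|\gamma_l|^{-\beta}$. Your closing remark on the consistency of the branch decomposition over $\operatorname{int}(I_k)$ is a sensible extra bookkeeping point that the paper leaves implicit.
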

\begin{proof}
The representation (\ref{app:eq1}) follows from a short calculation using the definition (\ref{app:eq:RPFapp}) of ${\mathcal L}_\beta$. 
Since $|\varphi_{kl}'(z)|^\beta=|\gamma_l|^{-\beta}$ is constant and the disk $D_R$ satisfies  
(\ref{app:eq:adapted}), the operator maps ${\mathcal H}(D_R)$ to ${\mathcal H}(D_R)$. In order to see that 
${\mathcal L}_\beta: {\mathcal H}(D_R)\to {\mathcal H}(D_R)$ is bounded observe that if $h\in {\mathcal H}(D_R)$ with 
$\norm{h}{{\mathcal H}(D_R)}\leq 1$, then 
\[ 
\norm{{\mathcal L}_\beta h}{{\mathcal H}(D_R)}
=\max_{k}\norm{ ({\mathcal L}_\beta h)_k}{{H}^\infty(D_R)}
\leq \max_{k} \sum_{l}A_{lk}|\gamma_l|^{-\beta}<\infty\,.\qedhere
\]
\end{proof}

\begin{rem} The space ${\mathcal H}(D_R)$ is not the only suitable space
  of observables. Restricting to one and the same disk of analyticity 
for each branch, however, simplifies notation. More general spaces are discussed in 
\cite{BaJe_AM08} and \cite{BJ08}. 
\end{rem}

Next we shall explain why the spectrum of ${\mathcal L}_\beta$ viewed
as an operator on ${\mathcal H}(D_R)$ is given by the eigenvalues of
the diagonal blocks $T^{(mm)}(\beta)$ for $m\in {\mathbb N}_0$
defined in (\ref{blockmatrix}). 

The key ingredient of the proof of this statement 
is a factorisation of the transfer
operator together with an approximation argument. 

In order to explain the factorisation of the transfer
operator we observe that in 
(\ref{app:eq1}) the argument of $h_l$, that is 
$\varphi_{kl}(z)$, is contained in the 
smaller disk $D_r$ because of (\ref{app:eq:adapted}). 
We can thus use (\ref{app:eq1})
to define the  operator on a larger function space, namely 
${\mathcal H}(D_r)$. Note that the space is
`larger' as analyticity is only required on a smaller disk $D_r \subset
D_R$. We shall write $\tilde{\mathcal L}_\beta : 
{\mathcal H}(D_r) \rightarrow {\mathcal H}(D_R)$ 
in order to distinguish this lifted operator from the one occurring in
Proposition~\ref{app:prop1}. Note that the arguments in this
proposition can be adapted to show that $\tilde{\mathcal L}_\beta$ is a
bounded operator.   

It is tempting to think of ${\mathcal L}_\beta$ and 
$\tilde{\mathcal L}_\beta$ as being essentially the same, since they are
given by the same functional expression. However, as operators the two
are different as the latter is defined on a larger
domain. Yet, both operators are related by restriction. In
order to give a precise formulation of this fact we introduce a
bounded embedding operator $\mathcal J$ which maps the smaller space
${\mathcal H}(D_R)$ injectively into the larger space ${\mathcal H}(D_r)$. 
To be precise ${\mathcal J} : {\mathcal H}(D_R) \rightarrow {\mathcal
  H}(D_r)$ is given by $({\mathcal J} h)_k= Jh_k$, where  
$J:H^\infty(D_R)\to H^\infty(D_r)$ in turn is given by $(Jh)(z)=h(z)$
for $z\in D_r$. 
Note that ${\mathcal J}$ looks superficially like the identity. This,
however, is misleading as argument and image are considered in
different spaces. 

The relation between ${\mathcal L}_\beta$ and $\tilde{\mathcal L}_\beta$ can
now be written as
\begin{equation}
\label{app:Lfactor}
{\mathcal L}_\beta = \tilde{{\mathcal L}}_\beta {\mathcal J}\,. 
\end{equation}
Note that the factorisation above disentangles the intricacies of
the map contained in $\tilde{\mathcal{L}}_\beta$ from its general  
expansiveness contained in $\mathcal{J}$. 

We now turn to the approximation argument. 
For piecewise linear Markov maps the transfer
operator is easily seen to map 
piecewise polynomial functions of degree at most $M$ into  
piecewise polynomial functions of degree at most $M$. This follows from a 
straightforward calculation using the fact that the 
inverse branches are affine functions.

In order to exploit this property of the transfer operator further we
shall introduce a projection operator defined as follows: 
given an analytic function $h$ in $H^\infty(D_R)$ and an integer $M$ 
we use $P_Mh$ to denote the truncated Taylor series expansion 
\[ (P_Mh)(z) =\sum_{k=0}^M \frac{h^{(k)}(z_0)}{k!}(z-z_0)^k\,,\]
where $z_0$ denotes the centre of the disk $D_R$. Clearly, $P_M$ is a
projection operator. 

It turns out that the projections $P_M$ approximate the
embedding $J$ for large $M$ in a strong sense. In order to make this
statement, the heart of the approximation argument alluded to above,
more precise, we observe that, by Cauchy's Integral Theorem, we have for any $h\in
H^\infty(D_R)$ and any $z\in D_r$  
\[ h(z)-(P_Mh)(z)=\frac{1}{2\pi i}\oint_\Gamma\frac{h(\zeta)}{\zeta-z}
\frac{(z-z_0)^{M+1}}{(\zeta-z_0)^{M+1}}\,d\zeta\,, \]
where the contour $\Gamma$ is the positively oriented boundary of a
disk centred at $z_0$ with radius lying strictly between $r$ and $R$.
It follows that the norm of 
$J-JP_M$ viewed as an operator from $H^\infty(D_R)$ to
$H^\infty(D_r)$ satisfies
\[ \norm{J-JP_M}{H^\infty(D_R)\to H^\infty(D_r)}\leq \frac{R}{R-r}\left ( \frac{r}{R}\right )^{M+1}\,.\] 
In particular, we have 
\begin{equation}
\label{app:Japprox}
\lim_{M\to \infty}\norm{J-JP_M}{H^\infty(D_R)\to H^\infty(D_r)}=0\,.
\end{equation}
In order to extend this result to the space of piecewise
analytic functions ${\mathcal H}(D_R)$ we introduce the projection operator 
$\mathcal{P}_M : \mathcal{H}(D_R) \rightarrow \mathcal{H}(D_R)$ 
by setting $\mathcal{P}_M h = (P_M h_1,\ldots, P_M h_N)$. 
The analogue
of (\ref{app:Japprox}) now reads 
\begin{equation}
\label{app:Jcalapprox}
\lim_{M\to \infty}\norm{{\mathcal J}-{\mathcal J}{\mathcal P}_M}{{\mathcal H}(D_R)\to {\mathcal H}(D_r)}=0\,.
\end{equation}
We are now able to combine the factorisation (\ref{app:Lfactor}) with the approximation result above to
prove the main result of this Appendix.  

\begin{prop}\label{app:prop2}
Suppose we are given a piecewise linear expanding Markov map $f$ with 
inverse branches $\varphi_{kl}$ and disks $D_r\subset D_R$ satisfying (\ref{app:eq:adapted}). 
Then, for any real $\beta$, the transfer operator ${\mathcal L}_\beta$ viewed as an operator 
on ${\mathcal H}(D_R)$ is compact and its non-zero eigenvalues (with multiplicities) are precisely the non-zero eigenvalues 
of the matrices $T^{(mm)}(\beta)$ with $m\in {\mathbb N}_0$ given in (\ref{melements}). 
\end{prop}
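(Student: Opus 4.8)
The plan is to exploit the factorisation $\mathcal{L}_\beta=\tilde{\mathcal{L}}_\beta\mathcal{J}$ from \eqref{app:Lfactor} together with the approximation result \eqref{app:Jcalapprox}. First I would establish compactness: since $\tilde{\mathcal{L}}_\beta:\mathcal{H}(D_r)\to\mathcal{H}(D_R)$ is bounded (as noted after its definition) and $\mathcal{J}:\mathcal{H}(D_R)\to\mathcal{H}(D_r)$ is compact — because $\mathcal{J}$ is the norm-limit of the finite-rank operators $\mathcal{J}\mathcal{P}_M$ by \eqref{app:Jcalapprox}, each $\mathcal{J}\mathcal{P}_M$ having finite rank $(M+1)N$ — the composite $\mathcal{L}_\beta=\tilde{\mathcal{L}}_\beta\mathcal{J}$ is compact on $\mathcal{H}(D_R)$. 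Hence its non-zero spectrum consists of eigenvalues of finite multiplicity, and we may speak of ``non-zero eigenvalues with multiplicities''.

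Next I would identify these eigenvalues with those of the matrices $T^{(mm)}(\beta)$. The space $\mathcal{Q}_M\subset\mathcal{H}(D_R)$ of piecewise polynomials of degree at most $M$ (the range of $\mathcal{P}_M$) is $\mathcal{L}_\beta$-invariant, since the inverse branches $\varphi_{kl}$ are affine and $|\varphi_{kl}'|^\beta$ is constant; indeed $\mathcal{L}_\beta\mathcal{P}_M=\mathcal{P}_M\mathcal{L}_\beta\mathcal{P}_M$. In the basis of piecewise monomials, $\mathcal{L}_\beta$ restricted to $\mathcal{Q}_M$ is represented by the block upper-triangular matrix \eqref{blockmatrix} with entries \eqref{melements}, so its spectrum on $\mathcal{Q}_M$ is exactly $\bigcup_{m=0}^M\mathrm{spec}(T^{(mm)}(\beta))$ (with multiplicities, reading off the diagonal blocks). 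The task is to show that enlarging $M$ does not produce new non-zero eigenvalues and that every non-zero eigenvalue of $\mathcal{L}_\beta$ appears this way.

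For the ``$\supseteq$'' direction this is immediate: each $\mathcal{Q}_M$ is an invariant subspace, so every eigenvalue of $\mathcal{L}_\beta|_{\mathcal{Q}_M}$ is an eigenvalue of $\mathcal{L}_\beta$, and finite-dimensional algebraic multiplicities do not decrease when passing to the full operator. For ``$\subseteq$'', the key step — and the one I expect to be the main obstacle — is a spectral approximation argument: one shows that the finite-rank operators $\mathcal{L}_\beta^{(M)}:=\tilde{\mathcal{L}}_\beta\mathcal{J}\mathcal{P}_M=\mathcal{L}_\beta\mathcal{P}_M$ converge to $\mathcal{L}_\beta$ in operator norm, because $\|\mathcal{L}_\beta-\mathcal{L}_\beta^{(M)}\|=\|\tilde{\mathcal{L}}_\beta(\mathcal{J}-\mathcal{J}\mathcal{P}_M)\|\leq\|\tilde{\mathcal{L}}_\beta\|\,\|\mathcal{J}-\mathcal{J}\mathcal{P}_M\|\to0$ by \eqref{app:Jcalapprox}. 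Norm convergence of operators implies convergence of spectral projections associated with any fixed non-zero eigenvalue $\lambda$ of $\mathcal{L}_\beta$ (standard functional calculus: encircle $\lambda$ by a small contour in the resolvent set and pass to the limit). Hence for $M$ large enough $\mathcal{L}_\beta^{(M)}$ — and therefore $\mathcal{L}_\beta|_{\mathcal{Q}_M}$, whose non-zero eigenvalues coincide with those of $\mathcal{L}_\beta^{(M)}$ — has an eigenvalue near $\lambda$ with the same total multiplicity; since the eigenvalues of $\mathcal{L}_\beta|_{\mathcal{Q}_M}$ are a subset of those of $\mathcal{L}_\beta$ and are discrete, this forces $\lambda$ itself to be an eigenvalue of some $T^{(mm)}(\beta)$ with the correct multiplicity. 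The care needed here is purely in marshalling the standard perturbation-theoretic facts (e.g.\ from Kato) about Riesz projections under norm-convergence, and in checking that the multiplicities match up rather than merely the eigenvalue locations; once that bookkeeping is in place the proposition follows.
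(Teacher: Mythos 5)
Your proposal follows essentially the same route as the paper: the factorisation $\mathcal{L}_\beta=\tilde{\mathcal{L}}_\beta\mathcal{J}$ combined with the norm convergence \eqref{app:Jcalapprox} to get compactness and finite-rank approximation, the $\mathcal{L}_\beta$-invariance of the piecewise-polynomial subspaces to read off the eigenvalues from the diagonal blocks $T^{(mm)}(\beta)$, and a spectral approximation argument under operator-norm convergence to transfer the non-zero spectrum (the paper simply cites \cite[XI.9.5]{DS} for this last step, where you sketch the Riesz-projection argument and add the useful observation that the approximating eigenvalues, being eigenvalues of the restriction to an invariant subspace, are already eigenvalues of $\mathcal{L}_\beta$ and hence must coincide with $\lambda$). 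This is correct and matches the paper's proof in all essentials.
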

\begin{proof}
We start by recalling that for every $M\geq 0$ the transfer operator ${\mathcal L}_\beta$ leaves the 
space $\mathcal{P}_M(\mathcal{H}(D_R))$ invariant, that is, ${\mathcal L}_\beta(\mathcal{P}_M(\mathcal{H}(D_R))) 
\subseteq \mathcal{P}_M(\mathcal{H}(D_R))$. Thus 
\[ ({\mathcal I}-\mathcal{P}_M){\mathcal L}_\beta\mathcal{P}_M=0\,,\]
where ${\mathcal I}$ denotes the identity on ${\mathcal H}(D_R)$. 

Using the above equation and the factorisation (\ref{app:Lfactor}) we see that 
\begin{multline*}
 \norm{{\mathcal L}_\beta- \mathcal{P}_M{\mathcal L}_\beta\mathcal{P}_M    }{{\mathcal H}(D_R)}
  =\norm{{\mathcal L}_\beta(\mathcal{I}-\mathcal{P}_M)    }{{\mathcal H}(D_R)}\\
    =\nnorm{\tilde{{\mathcal L}}_\beta\mathcal{J}(\mathcal{I}-\mathcal{P}_M)    }{{\mathcal H}(D_R)}
    \leq \nnorm{\tilde{{\mathcal L}}_\beta}{\mathcal{H}(D_r)\to \mathcal{H}(D_R)}
     \nnorm{\mathcal{J}-\mathcal{J}\mathcal{P}_M    }{{\mathcal H}(D_R)\to \mathcal{H}(D_r)}\,,
\end{multline*}
which, using (\ref{app:Jcalapprox}), implies 
\begin{equation}
\label{app:Lapprox}
\lim_{M\to \infty}\norm{{\mathcal L}_\beta- \mathcal{P}_M{\mathcal L}_\beta\mathcal{P}_M    }{{\mathcal H}(D_R)}=0\,.
\end{equation}
Since $\mathcal{P}_M{\mathcal L}_\beta\mathcal{P}_M$ is a finite-rank operator for every $M$, the limit above implies that
${\mathcal L}_\beta$ is compact. Clearly, the non-zero eigenvalues of  each $\mathcal{P}_M{\mathcal L}_\beta\mathcal{P}_M$ are exactly
the non-zero eigenvalues of the block matrices (\ref{blockmatrix}). 
The remaining assertion, namely that the non-zero spectrum of the transfer 
operator is captured by the non-zero spectra of the finite dimensional matrix representations 
follows from (\ref{app:Lapprox}) together with an abstract spectral approximation result (see \cite[XI.9.5]{DS}). 
\end{proof}

Specialising to topologically mixing Markov maps we obtain the following refinement of the above proposition. 

\begin{cor}\label{app:cor}
Suppose that the hypotheses of the previous proposition hold. If the Markov map $f$ is also topologically mixing, then 
${\mathcal L}_\beta:{\mathcal H}(D_R)\to {\mathcal H}(D_R)$ has a simple positive leading eigenvalue $\nu_0(\beta)$. 
Moreover, this leading eigenvalue is the Perron eigenvalue of the matrix $T^{(00)}(\beta)$. 
\end{cor}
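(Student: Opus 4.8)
The plan is to deduce the corollary from Proposition~\ref{app:prop2} by showing that, among all the non-zero eigenvalues of the matrices $T^{(mm)}(\beta)$, the Perron eigenvalue $\nu_0(\beta)$ of $T^{(00)}(\beta)$ strictly dominates in modulus, and is simple as an eigenvalue of ${\mathcal L}_\beta$. First I would invoke Proposition~\ref{app:prop2} to identify the non-zero spectrum of ${\mathcal L}_\beta$ on ${\mathcal H}(D_R)$ with $\bigcup_{m\geq 0}\{\text{non-zero eigenvalues of }T^{(mm)}(\beta)\}$, counted with multiplicity. So it suffices to analyse this countable family of finite matrices.

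Next I would recall from the main text (equation~\eqref{melements}) that $T^{(mm)}(\beta)$ has entries $T^{(mm)}_{kl}(\beta)=A_{lk}/(|\gamma_l|^\beta\gamma_l^m)$, so that $T^{(mm)}(\beta)$ differs from $T^{(00)}(\beta+?)$ only through the scalar factors $\gamma_l^{-m}$; more usefully, the entrywise modulus satisfies $|T^{(mm)}_{kl}(\beta)| = A_{lk}/(|\gamma_l|^{\beta+m}) = T^{(00)}_{kl}(\beta+m) \leq T^{(00)}_{kl}(\beta)$ whenever $m\geq 0$ and $|\gamma_l|\geq 1$, the latter being guaranteed by expansivity. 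Since $f$ is topologically mixing, $A$ (hence $T^{(00)}(\beta)$, which has the same zero pattern as $A^{\mathrm{T}}$) is primitive, so by the Perron--Frobenius theorem $T^{(00)}(\beta)$ has a simple positive eigenvalue $\nu_0(\beta)$ strictly exceeding the modulus of every other eigenvalue of $T^{(00)}(\beta)$, and moreover strictly exceeding the spectral radius of any matrix whose entrywise moduli are dominated by those of $T^{(00)}(\beta)$ unless that matrix has the same moduli (Wielandt's comparison theorem, \cite[p.~536]{Lancaster:69}). Applying this with $m=0$ handles $T^{(00)}(\beta)$ itself (all other eigenvalues are strictly smaller in modulus), and applying it with $m\geq 1$ shows that the spectral radius of each $T^{(mm)}(\beta)$ is at most $\nu_0(\beta+m)<\nu_0(\beta)$, with strict inequality because for $m\geq 1$ the diagonal entries of $|T^{(mm)}(\beta)|$ are strictly smaller than those of $T^{(00)}(\beta)$ — here I would use that a primitive matrix necessarily has at least one positive diagonal entry, or more simply monotonicity of $\nu_0$ in $\beta$, which follows from Lemma~\ref{prop:TopPres} since $P(\beta)=\ln\nu_0(\beta)$ and $P$ is strictly increasing for $\beta$ in the relevant range (or directly from Wielandt). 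Hence no eigenvalue coming from any block $m\geq 1$ can equal or exceed $\nu_0(\beta)$ in modulus, and within the $m=0$ block $\nu_0(\beta)$ is simple and strictly dominant; so $\nu_0(\beta)$ is the simple leading eigenvalue of ${\mathcal L}_\beta$ on ${\mathcal H}(D_R)$, and by construction it is the Perron eigenvalue of $T^{(00)}(\beta)$.

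The main obstacle I anticipate is bookkeeping the relationship between the abstract simplicity of $\nu_0(\beta)$ as an eigenvalue of the compact operator ${\mathcal L}_\beta$ and its simplicity as an eigenvalue of the finite matrix $T^{(00)}(\beta)$: Proposition~\ref{app:prop2} transfers non-zero eigenvalues \emph{with multiplicities}, so I must be careful that the multiplicity bookkeeping across the infinitely many blocks does not allow a stray contribution at modulus $\nu_0(\beta)$. This is exactly where the strict inequality $\nu_0(\beta+m)<\nu_0(\beta)$ for $m\geq 1$ is essential, and I would make sure to state clearly that expansivity ($|\gamma_l|>1$) gives strictness rather than mere $\leq$. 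A secondary, purely cosmetic point is that $T^{(00)}(\beta)$ as written is (up to transposition) the adjacency-weighted matrix of $A$; since $A$ and $A^{\mathrm T}$ are simultaneously primitive and share a Perron eigenvalue, the Perron--Frobenius conclusions apply verbatim, and I would note this in passing rather than belabour it.
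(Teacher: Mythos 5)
Your proof is correct and takes essentially the same route as the paper: both identify the non-zero spectrum of ${\mathcal L}_\beta$ with the eigenvalues of the diagonal blocks via Proposition~\ref{app:prop2} and then use entrywise domination of $|T^{(mm)}(\beta)|$ by $T^{(00)}(\beta)$ --- strict because $|\gamma_l|>1$ --- to force $r(T^{(mm)}(\beta))<\nu_0(\beta)$ for all $m\geq 1$, the paper packaging this as $r(T^{(mm)}(\beta))\leq C\nu_0(\beta)$ with $C=1/\inf_l|\gamma_l|<1$ via a Frobenius-norm estimate and the spectral radius formula, while you invoke Wielandt's comparison together with the identity $|T^{(mm)}_{kl}(\beta)|=T^{(00)}_{kl}(\beta+m)$. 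The only slip is the claim that $P(\beta)=\ln\nu_0(\beta)$ is strictly \emph{increasing}: for an expanding map it is strictly decreasing (its derivative is minus a Lyapunov exponent), which is in fact the direction you need for $\nu_0(\beta+m)<\nu_0(\beta)$, and your Wielandt alternative covers this in any case.
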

 \begin{proof}
This follows from the previous proposition together with the observation that for $m\geq 1$ the spectral radius $r(T^{(mm)}(\beta))$ of 
the matrix  $T^{(mm)}(\beta)$ is strictly smaller than the Perron eigenvalue of $T^{(00)}(\beta)$. In order to see this note that 
for all $m\geq 1$ we have 
\[ |T^{(mm)}_{kl}(\beta)|\leq C T^{(00)}_{kl}(\beta)\,, \]
where 
\[ C=\frac{1}{\inf_l |\gamma_l|}<1\,.\]
A short calculation shows that for each $k\geq 0$ and each $m\geq 1$ we have 
\[ \norm{ \left (T^{(mm)}(\beta) \right )^k  }{F}\leq C^k \norm{  \left (T^{(00)}(\beta) \right )^k  }{F}\,, \]
where $\norm{\cdot}{F}$ denotes the Frobenius norm. The spectral radius formula now implies that 
\[  r(T^{(mm)}(\beta))\leq C\nu_0(\beta)\,.\qedhere\]
 \end{proof}

\end{document}